\newcommand{\Q}{\mathbb{Q}} %rational numbers
\newcommand{\C}{\mathbb{C}} % complex numbers
\newcommand{\Z}{\mathbb{Z}} %integers
\newcommand{\F}{\mathbb{F}} %for finite fields
\newcommand{\bs}{\backslash}
\newcommand{\calP}{\mathcal{P}}
\newcommand{\frake}{\mathfrak{e}}
\DeclareMathOperator{\SL}{SL}
\DeclareMathOperator{\GL}{GL}
\DeclareMathOperator{\Mp}{Mp}
\DeclareMathOperator{\tr}{tr}
\DeclareMathOperator{\supp}{supp}
\renewcommand{\labelenumi}{(\theenumi)}
\newtheorem{theorem}{Theorem}[section]
\newtheorem{proposition}[theorem]{Proposition}
\newtheorem{lemma}[theorem]{Lemma}
\newtheorem{algorithm}{Algorithm}
\numberwithin{algorithm}{section}
\theoremstyle{definition}
\newtheorem{remark}[theorem]{Remark}
\newcommand{\QD}[1][]{Q_\Delta\ifthenelse{\equal{#1}{}}{}{\left(#1\right)}}
\DeclareMathOperator{\sym}{sym}
\newcommand{\sig}{\operatorname{sig}}
\newcommand{\zxz}[4]{\begin{pmatrix} #1 & #2 \\ #3 & #4 \end{pmatrix}}
\newcommand{\leg}[2]{\left( \frac{#1}{#2} \right)}
\newcommand{\kzxz}[4]{\left(\begin{smallmatrix} #1 & #2 \\ #3 & #4\end{smallmatrix}\right) }
\newcommand{\kabcd}{\kzxz{a}{b}{c}{d}}
\renewcommand{\H}{\mathbb{H}}
\newcommand {\fqm}[1] {\mathfrak{#1}}
\renewcommand    {\sym}[1]        {\operatorname{#1}}
\DeclareMathOperator{\card}{card}
\newcommand {\cd}[1] {\card\left(#1\right)}
\renewcommand {\zxz}[4] {\left[
    \begin{smallmatrix}
      {#1}&{#2}\\ {#3}&{#4}
    \end{smallmatrix}
\right]}
\renewcommand {\kzxz}[4] {\zxz {#1}{#2}{#3}{#4}} 
\renewcommand {\leg}[2] {\genfrac(){}{0}{#1}{#2}}
\newcommand{\eps}{\epsilon}
\numberwithin{equation}{section}
\begin{document}
\title{Computing invariants of the Weil representation}
 
\author{Stephan Ehlen}
\email{stephan.ehlen@math.uni-koeln.de}
\address{University of Cologne, Mathematisches Institut, Weyertal 86-90, D-50931 Cologne, Germany}
\author{Nils-Peter Skoruppa}
\email{nils.skoruppa@gmail.de}
\address{Universität Siegen, Fachbereich Mathematik, Walter-Flex-Str. 3, D-57072 Siegen, Germany}

\keywords{%
  Weil Representations, Finte Quadratic Modules}
\subjclass[2010]{%
  11F27 %Theta series; Weil representation; theta correspondences
}

\begin{abstract}
  We propose an algorithm for computing bases and dimensions of spaces
  of invariants of Weil representations of $\SL_2(\Z)$ associated to
  finite quadratic modules. We prove that these spaces are defined
  over~$\Z$, and that their dimension remains stable if we replace the
  base field by suitable finite prime fields.
\end{abstract}
\maketitle

\section{Introduction}
\label{sec:intro}

Weil representations associated to finite abelian groups $A$ equipped
with a non-degenerate quadratic form $Q$ provide a fundamental tool in
the theory of automorphic forms. They are at the basis of the theory
of automorphic products, the theory of Jacobi forms or Siegel modular
forms of singular and critical weight, and they find also applications
in other disciplines like coding theory or quantum field theory. Of
particular interest among the mentioned applications is the space
$\C[A]^G$ of invariants of the Weil representations of $G=\SL_2(\Z)$
associated to a given finite quadratic module $(A,Q)$. Despite the
importance of $\C[A]^G$ for the indicated applications neither any
explicit closed formula is known for the dimension of $\C[A]^G$ nor
any useful description\footnote{However, if $(A,Q)$ possesses
  a self-dual isotropic subgroup $U$ then the characteristic function
  of $U$ is quickly checked to be an invariant, and one can show that,
  in the case that $(A,Q)$ possesses self-dual subgroups, the
  characteristic functions of the self-dual isotropic subgroups span
  in fact the space $\C[A]^G$ (A proof of this will be given
  in~\cite{Skoruppa}). But an arbitrary finite quadratic module does
  not necessarily possess self-dual isotropic subgroups and still
  admits nonzero invariants if its order is big enough.}
of its elements.

The purpose of the present note is to discuss questions related to the
computation of the dimension and a basis of $\C[A]^G$ for a given
finite quadratic module~$(A,Q)$. In particular, we develop an algorithm
(Algorithm~\ref{alg:algo-2}) for computing a basis of $\C[A]^G$ which we
also implemented and ran successfully in various examples. We
mention two results of this article which might be of independent
interest. First, we prove that $\C[A]^G$ always possesses a basis whose
elements are in $\Z[A]$ (Theorem~\ref{thm:ring-of-definition}). Second, if a
finite prime field $\F_\ell$ contains the $N$th root of unity, where
$N$ is the level of $(A,Q)$, then the Weil representation can also be
defined on $\F_\ell[A]$. We prove that then
$\dim \C[A]^G = \dim \F_\ell[A]^G$ (except for possibly $(N,\ell)=(2,3)$).
Our algorithm has already been used succesfully to compute the dimension of spaces of vector valued
cusp forms of weight $2$ and $3/2$ in \cite{bef-simple}, where a classification
of all lattices of signature $(2,n)$ without obstructions to the existence of weakly holomorphic
modular forms of weight $1-\tfrac{n}{2}$ for the associated Weil representation was given.

The plan of this note is as follows. In Section~\ref{sec:fqm} we
recall the basic definitions and facts from the theory of finite
quadratic modules and its associated Weil representations. In
Section~\ref{sec:invariants} we prove some basic facts about the space
of invariants $\C[A]^G$. Most of the material of this section is
probably known to specialists. However, since it is often difficult to
find suitable references we decided to include this section. To
our knowledge Theorem~\ref{thm:ring-of-definition} is new, which shows that
the space of invariants $\C[A]^G$ is in fact defined over $\Z$. In
Section~\ref{sec:algorithm} we explain our algorithm for computing a
basis for $\C[A]^G$, and we discuss some improvements. In
Section~\ref{sec:reduction-mod-p} we consider the reduction of Weil
representations modulo suitable primes $\ell$ and prove that the
dimension of the space of invariants remains stable under
reduction. This interesting fact can be used to improve the run-time of our algorithm.
Finally, in Section~\ref{sec:tables} we provide tables of dimensions for quadratic
modules of small order.

\section{Finite quadratic modules and Weil representations}
\label{sec:fqm}

A {\em finite quadratic module} (also called a finite quadratic form
or discriminant form in the literature) is a pair $\fqm A=(A,Q)$
consisting of a finite abelian group $A$ together with a
$\Q/\Z$-valued non-degenerate quadratic form $Q$ on $A$.  The bilinear
form corresponding to $Q$ is defined as
\begin{equation*}
  Q(x,y) := Q(x+y) - Q(x) - Q(y)
  .  
\end{equation*}
The quadratic form $Q$ is called non-degenerate if $Q( \cdot, \cdot )$
is non-degenerate, i.e.  if there exists no $x \in A\setminus\{0\}$,
such that $Q(x,y) = 0$ for all $y \in A$.  Two finite quadratic
modules $\fqm A=(A,Q)$ and $\fqm B=(B,R)$ are called isomorphic if
there exists an isomorphism of groups $f:A\rightarrow B$ such that
$Q=R\circ f$.  The theory of finite quadratic modules has a long
history; see e.g.~\cite{Wall-1}, \cite{Wall-2}, \cite{Nikulin} and the
upcoming~\cite{Skoruppa}.

If $\fqm L = (L,\beta)$ is an even lattice, the quadratic form $\beta$
on $L$ induces a $\Q/\Z$-valued quadratic form $Q$ on the discriminant
group $L'/L$ of $\fqm L$.  The pair $\sym{D}_{\fqm L}:=(L'/L,Q)$
defines a finite quadratic module, which we call the
\emph{discriminant module} of~$\fqm L$.  According to
\cite[Thm.~(6)]{Wall-1}, any finite quadratic module can be obtained as the
discriminant module of an even lattice $\fqm L$.  If $\fqm A=(A,Q)$ is
a finite quadratic module and $\fqm L$ a lattice whose discriminant
module is isomorphic to~$\fqm A$, then the difference $b^+ - b^-$ of
the real signature $(b^+, b^-)$ of~$\fqm L$ is already determined
modulo $8$ by~$\fqm A$. Namely, by Milgram's
formula~\cite[p.~127]{Milnor-Husemoeller} one has
\[
\frac{1}{\sqrt{\cd A}} \sum_{x \in A} e\left(Q(x)\right) = e((b^{+}
- b^{-})/8),
\]
where we use $e(z) = e^{2\pi i z}$ for $z\in \C$.  We call
\begin{equation*}
  \sig(\fqm A) := b^{+} - b^{-}\bmod 8  \in \Z/8\Z
\end{equation*}
the \emph{signature of $\fqm A$}. The number
\[
  N = \min\{ n\in \Z_{>0}\mid\; \text{$nQ(x)\in \Z$ for all $x\in A$}\}
\]
is called the {\em level of $\fqm A$}.

The metaplectic extension $\Mp_2(\Z)$ \label{bi1} of $\SL_2(\Z)$
(i.e.~the nontrivial twofold central extension of $\SL_2(\Z)$) can be
realized as the group of pairs $(M,\phi(\tau))$, where
$M=\kabcd\in\SL_2(\Z)$ and $\phi$ is a holomorphic function on the
complex upper half plane $\H$ with $\phi(\tau)^2=c\tau+d$ (see
e.~g.~\cite{Shimura}).  The group $\SL_2(\Z)$ is generated by
\begin{equation*}
  T=\zxz 1101\qquad\text{and}\qquad S=\zxz 0{-1}10
  ,
\end{equation*}
and the group $\Mp_2(\Z)$ is generated by $T^*:=(T,1)$ and
$S^*=(S,\sqrt \tau)$ with relations ${S^*}^2=(S^*T^*)^3=\zeta$, where
$\zeta=\left( \kzxz{-1}{0}{0}{-1}, i\right)$ is the standard generator
of the center of $\Mp_2(\Z)$.

The Weil representation $\rho_{\fqm A}$ associated to $\fqm A$ is a
representation of $\Mp_2(\Z)$ on the group algebra $\C[A]$. Here, and
throughout, we denote the standard basis of $\C[A]$ by
$(\frake_x)_{x \in A}$. The action of $\rho_{\fqm A}$ can
then be given in terms of the generators $S,T\in\Mp_2(\Z)$ as follows:
\begin{align*}
  % \label{actionT}
  \rho_{\fqm A}(T^*)\frake_x &= e(Q(x)) \frake_x,\\
  % \label{actionS}
  \rho_{\fqm A}(S^*)\frake_x &= \frac{ e(-\sig(\fqm A)/8)}{\sqrt{\cd A}}
                                    \sum_{y \in A} e\left(-Q(x,y)\right) \frake_y.
\end{align*}
We shall sometimes simply write $\alpha.v$ for
$\rho_{\fqm A}(\alpha)v$, i.e.~we consider $\C[A]$ as
$\Mp_2(\Z)$-module via the action
$(\alpha,v)\mapsto \rho_{\fqm A}(\alpha)v$. For details of the theory
of Weil representations attached to finite quadratic modules we refer
the reader to~\cite{hitchhikers}, \cite{Nobs},
\cite{Nobs-Wolfart},~\cite{Skoruppa},~\cite{Stroemberg}.

The kernel of the projection of $\Mp_2(\Z)$ onto its first coordinate
is the subgroup generated by~$(1,-1)$. It is easily checked that
$\rho_{\fqm A}((1,-1))=\rho_{\fqm A}(S^*)^4$ acts as multiplication by
$e(\sig(\fqm A)/2)$. This simple observation has two immediate
consequences. First of all, the space of invariants
$\C[A]^{\Mp_2(\Z)}$, i.e.~the subspace of elements $v$ in $\C[A]$
fixed by $\Mp_2(\Z)$, reduces to~$\{0\}$ unless $\sig(\fqm A)$ is
even. Secondly, $\rho_{\fqm A}$ descends to a representation of
$\SL_2(\Z)$ if and only~$\sig(\fqm A)$ is even. Note, that~$\sig(\fqm A)$
is always even if the level of~$\fqm A$ is odd as follows from
Milgram's formula.

\section{Invariants}
\label{sec:invariants}
Let $\fqm A=(A,Q)$ be a finite quadratic module of level $N$.
We shall assume in this section that $\sig(\fqm A)$
is even.  As we saw at the end of the last section the space
of invariants is otherwise zero. The representation $\rho_{\fqm A}$ then
descends to a representation of $\SL_2(\Z)$ and, even more,
factors through a representation of the finite group
$\Gamma(N) \bs \SL_2(\Z)$, i.e.~of the group
\begin{equation*}
  G_N:=\SL_2(\Z/N\Z).
\end{equation*}
We will denote this representation also by $\rho_{\fqm A}$.

An easy closed and explicit formula for the dimension of $\C[A]^{G_N}$ is
not known for general~$\fqm A$.  Of course, orthogonality of group
characters yields
\[
\dim \C[A]^{G_N} = \frac{1}{\cd {G_N}} \sum_{g \in {G_N}} \tr(\rho_{\fqm A}(g)).
\]
While it is therefore in principle possible to compute the dimension of
$\C[A]^{G_N}$, there are two obstructions in practice . First of all, the
size of the sum on the right can become very large. More precisely,
the number of conjugacy classes of $G_N$ is asymptotically equal to $N$
for increasing~$N$ (see~\cite[Tabelle~2]{Nobs}). Secondly, the
straight-forward formulas for $\tr(\rho_{\fqm A}(g))$ which follow
from explicit formulas for the matrix coefficients of
$\rho_{\fqm A}(g)$ involve trigonometric sums with about $\cd A^2$
many terms (see e.g.~\cite[Theorem 6.4]{Stroemberg})\footnote{However,
  in~\cite{hitchhikers} a much simpler formula is given, which
  expresses the traces of the Weil representations in terms of the
  natural invariants for the conjugacy classes of $\SL_2(\Z)$.}.

The following proposition implies that we can compute the invariants
or the dimension of the space of invariants ``locally'', i.e. for
every $p$-component of $A$ separately. For a given
prime~$p$, denote the $p$-subgroup of $A$ by $A_p$. It is quickly
verified that $\fqm A_p := (A_p,Q|_{A_p})$ is again a finite quadratic module.
Moreover, the decomposition $A=\bigoplus_{p\mid \cd A} A_p$ of $A$ as sum
over its $p$-subgroups $A_p$ induces an orthogonal direct sum decomposition of $\fqm A$.
We also decompose $G_N$ as a product
\[
  G \cong \prod_{p^\nu \Vert N} G_{p^\nu}
\]
with $G_{p^\nu} := \SL_2(\Z/p^{\nu}\Z)$ via the Chinese remainder theorem.
In this way $\bigotimes_p \C[A_p]$ becomes a $G_N$-module in the obvious way. 
For this, we note that the set of primes dividing $N$ is equal to the set of primes
dividing $\cd A$.

\begin{proposition}
  \label{prop:local-decomposition}
  Let $A=\bigoplus_{p\mid N} A_p$ be the decomposition of $A$ as sum
  over its $p$-subgroups $A_p$. Then
  $\frake_{\oplus_p a_p}\mapsto \otimes_p \frake_{a_p}$ defines via
  linear extension an isomorphism of $G$-modules
  \[
  \C[A]\xrightarrow{\cong} \bigotimes_{p \mid N} \C[A_p].
  \]
  Under this isomorphism we have
  \begin{equation*}
    \C[A]^{G_N} \cong \bigotimes_{p^\nu \Vert N} \C[A_p]^{G_{p^\nu}}.
  \end{equation*}
\end{proposition}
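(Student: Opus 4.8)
The plan is to treat the two assertions in turn: first I would establish the $G$-module isomorphism at the level of the full representation, and then deduce the statement about invariants by a projection-operator argument.

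For the first assertion, note that the prescribed map is manifestly a linear isomorphism of vector spaces: since $A=\bigoplus_{p\mid N}A_p$, every element of $A$ is uniquely of the form $\oplus_p a_p$, so the vectors $\frake_{\oplus_p a_p}$ and their images $\otimes_p\frake_{a_p}$ are bases of $\C[A]$ and $\bigotimes_p\C[A_p]$ respectively. It remains to check $\Mp_2(\Z)$-equivariance, and since the action is determined by the generators $S^*,T^*$, it suffices to verify it on these two elements. Here I would use that the decomposition $A=\bigoplus_p A_p$ is orthogonal, so that for $x=\oplus_p x_p$ and $y=\oplus_p y_p$ one has $Q(x)=\sum_p Q(x_p)$ and $Q(x,y)=\sum_p Q(x_p,y_p)$, together with the multiplicativity $\cd A=\prod_p\cd{A_p}$ and the additivity $\sig(\fqm A)\equiv\sum_p\sig(\fqm A_p)\pmod 8$ (which itself follows from Milgram's formula applied to each factor). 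With these identities the factor $e(Q(x))$ in the $T^*$-action and the kernel $\tfrac{1}{\sqrt{\cd A}}e(-\sig(\fqm A)/8)\,e(-Q(x,y))$ in the $S^*$-action both split as products over $p$, and a short computation shows that $\rho_{\fqm A}(S^*)$ and $\rho_{\fqm A}(T^*)$ correspond under the map to $\otimes_p\rho_{\fqm A_p}(S^*)$ and $\otimes_p\rho_{\fqm A_p}(T^*)$. As both sides are representations of $\Mp_2(\Z)$ agreeing on generators, they agree everywhere, giving $\rho_{\fqm A}(g)=\otimes_p\rho_{\fqm A_p}(g)$ for all $g$.

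For the second assertion, the key point is to read off how the action factors under the Chinese remainder isomorphism $G_N\cong\prod_{p^\nu\Vert N}G_{p^\nu}$. Because the level of the $p$-group $\fqm A_p$ is exactly the $p$-part $p^\nu$ of $N$, the representation $\rho_{\fqm A_p}$ factors through $G_{p^\nu}=\SL_2(\Z/p^\nu\Z)$, so that $\rho_{\fqm A_p}(g)$ depends only on $g\bmod p^\nu$. Combined with the identity $\rho_{\fqm A}(g)=\otimes_p\rho_{\fqm A_p}(g)$ from the first part, this shows that an element of $G_N$ corresponding under the Chinese remainder theorem to $(g_{p^\nu})_{p^\nu}$ acts on $\bigotimes_p\C[A_p]$ as $\otimes_p\rho_{\fqm A_p}(g_{p^\nu})$. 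In other words, the $G_N$-action is precisely the \emph{external tensor product} of the $G_{p^\nu}$-representations $\C[A_p]$, each factor $G_{p^\nu}$ acting on its own tensor slot and trivially on the others.

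Finally, I would extract the invariants using the Reynolds averaging projector. Writing $P=\tfrac{1}{\cd{G_N}}\sum_{g\in G_N}\rho_{\fqm A}(g)$ and $P_p=\tfrac{1}{\cd{G_{p^\nu}}}\sum_{h\in G_{p^\nu}}\rho_{\fqm A_p}(h)$ for the projections onto the respective spaces of invariants, the product decomposition $\cd{G_N}=\prod_p\cd{G_{p^\nu}}$ together with the external-tensor description of the action and the multilinearity of $\otimes$ yields $P=\otimes_p P_p$. Since $\image P=\C[A]^{G_N}$ and $\image P_p=\C[A_p]^{G_{p^\nu}}$, and since the image of a tensor product of projections over a field is the tensor product of the images, we obtain $\C[A]^{G_N}\cong\bigotimes_{p^\nu\Vert N}\C[A_p]^{G_{p^\nu}}$, as claimed. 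I expect the main conceptual obstacle to lie in the third paragraph, namely correctly identifying the diagonal $\SL_2(\Z)$-action as an external tensor product under the Chinese remainder theorem, which hinges on each local representation factoring through the finite quotient $G_{p^\nu}$; by contrast, the generator check is routine bookkeeping and the concluding averaging step is formal once the action has been pinned down.
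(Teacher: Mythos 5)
Your proof is correct, and its first half coincides with the paper's: both reduce the $G$-equivariance of $\frake_{\oplus_p a_p}\mapsto\otimes_p\frake_{a_p}$ to a check on the generators $S$ and $T$, using the orthogonality of the decomposition $A=\bigoplus_p A_p$, the multiplicativity of $\cd A$, and the additivity of the signature modulo $8$ supplied by Milgram's formula (the paper compresses this to ``easily checked''; you spell it out). You differ only in how you conclude the statement about invariants. The paper deduces from the equivariance that $\sym{tr}(g,\C[A])=\prod_p\sym{tr}(g_p,\C[A_p])$ and then invokes orthogonality of group characters, i.e.\ it matches the dimension count $\dim\C[A]^{G_N}=\prod_{p^\nu\Vert N}\dim\C[A_p]^{G_{p^\nu}}$ against the evident inclusion of $\bigotimes_p\C[A_p]^{G_{p^\nu}}$ into the invariants. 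You instead factor the averaging projector as $P=\bigotimes_p P_p$ and use that the image of a tensor product of linear maps over a field is the tensor product of the images, which produces the isomorphism directly as the restriction of the module isomorphism, with no separate dimension comparison. The two arguments are essentially equivalent --- taking the trace of $P=\bigotimes_p P_p$ recovers exactly the character-orthogonality computation --- but your version has the small advantage of making explicit the point the paper leaves implicit: under the Chinese remainder isomorphism $G_N\cong\prod_{p^\nu\Vert N}G_{p^\nu}$ the diagonal $\SL_2(\Z)$-action becomes an external tensor product precisely because the level of $\fqm A_p$ is the $p$-part of $N$, so each $\rho_{\fqm A_p}$ factors through $G_{p^\nu}$.
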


\begin{remark}
  The proposition implies in particular
\[
  \dim_\C \C[A]^{G_N} = \prod_{p^\nu \Vert N} \dim_\C \C[A_p]^{G_{p^\nu}}.
\]
\end{remark}

\begin{proof}[Proof of Proposition~\ref{prop:local-decomposition}]
  The given map clearly defines an isomorphism of complex vector
  spaces.  That this map commutes with the action of $G_N$, where $G_N$
  acts component-wise on the right-hand side, as described above, is
  easily checked using the formulas for the $S$ and $T$-action.  It
  follows that
  \begin{equation*}
    \sym{tr}(g, \C[A])=\prod_p \sym{tr}(g_p,\C[A_p])
  \end{equation*}
  for all $g=\otimes_p g_p$ in $G_N$, which implies, in particular, the
  second statement via orthogonality of group characters.
\end{proof}

A natural problem is to determine the field or ring of
definition\footnote{We say that {\em a subspace $V$ of $\C[A]$ is
    defined over the ring~$R$} if it possesses a basis whose elements
  are in $R[A]$.} of the space $\C[A]^{G_N}$. From the formulas defining
$\rho_{\fqm A}$, it is clear that $\C[A]^{G_N}$ is defined over the
cyclotomic field $K_N$\footnote{For this one needs that
  $e(-\sig(\fqm A)/8)/\sqrt {\cd A}$ is in $K_N$, which can be read off
  from Milgram's formula.}.  However, it turns out that the invariants
are in fact defined over the field of rational numbers, as we shall
see in a moment. This will allow us in
Section~\ref{sec:reduction-mod-p} to compute a basis for $\C[A]^{G_N}$ by
doing the computations in $\F_\ell[A]$ for suitable sufficiently large
primes~$\ell$.

\begin{theorem}
  \label{thm:ring-of-definition}
  The space $\C[A]^{G_N}$ is defined over $\Z$.
\end{theorem}

For the proof we need some preparations.

\begin{lemma}
  \label{lem:Gamma0-action}
  For any $g= \zxz ab0d$ in $\SL_2(\Z/N\Z)$ and $x$ in $A$, one has
  \begin{equation*}
    \rho_{\fqm A}(g)\frake_x
    =
    \chi_{\fqm A}(d)\, e\left(bdQ(x)\right)\frake_{dx},
  \end{equation*}
  where $\chi_{\fqm A}(d)=\sigma_d(w)/w$ with
  $w=\sum_{x\in A}e\left(Q(x)\right)$.
\end{lemma}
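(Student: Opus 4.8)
The plan is to reduce to a diagonal matrix and then evaluate a Gauss sum. Throughout I work in the representation that has already descended to $\SL_2(\Z/N\Z)$, as permitted by the standing assumption that $\sig(\fqm A)$ is even. First I would factor
\[
g=\zxz a b0d=\zxz a00{a^{-1}}\,T^{bd},
\]
using $ad\equiv 1\pmod N$, so that $d=a^{-1}$ and the upper–left block is $D_a:=\zxz a00{a^{-1}}$. Since $\rho_{\fqm A}(T^{bd})\frake_x=e(bdQ(x))\frake_x$, the factor $e(bdQ(x))$ comes out immediately and the whole lemma reduces to the torus computation $\rho_{\fqm A}(D_a)\frake_x=\chi_{\fqm A}(d)\,\frake_{dx}$, i.e.\ to showing that the diagonal matrix acts by the permutation $\frake_x\mapsto\frake_{dx}$ scaled by $\chi_{\fqm A}(d)$.

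To compute $\rho_{\fqm A}(D_a)$ I would express $D_a$ through the generators. Writing $w(a)=\zxz 0{-a}{a^{-1}}0$ for the Weyl–type element, one has $D_a=w(a)\,S^{-1}$ together with the word $w(a)=T^{-a}\,S\,T^{-a^{-1}}\,S^{-1}\,T^{-a}$, a genuine identity in $\SL_2(\Z/N\Z)$. Applying $\rho_{\fqm A}$ turns this into an iterated sum over $A$. Abbreviating $c=e(-\sig(\fqm A)/8)/\sqrt{\cd A}=w^{-1}$ for the constant in the $S$–action (the equality $c=w^{-1}$ being Milgram's formula), the inner sum that appears is $\sum_{y}e\big(-a^{-1}Q(y)+Q(x-z,y)\big)$, which one evaluates by completing the square via the substitution $y\mapsto y-a(x-z)$; this step uses only the non-degeneracy of $Q(\cdot,\cdot)$, and the surviving Gauss sum is $\sum_{y}e(-a^{-1}Q(y))=\sigma_{-a^{-1}}(w)$. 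A second appeal to non-degeneracy collapses the remaining sum over $z$ to the single index $z=dx$, and after the $Q$–phases cancel one is left with $\rho_{\fqm A}(D_a)\frake_x=\bar c\,\sigma_{-d}(w)\,\frake_{dx}$.

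It then remains to identify the scalar $\bar c\,\sigma_{-d}(w)$ with $\chi_{\fqm A}(d)=\sigma_d(w)/w$. Since $\bar c=1/\bar w$ and $\bar w=\sigma_{-1}(w)$, this scalar equals $\sigma_{-d}(w)/\sigma_{-1}(w)=\sigma_{-1}\!\big(\sigma_d(w)/w\big)=\sigma_{-1}(\chi_{\fqm A}(d))$. The crucial point is that $\chi_{\fqm A}(d)$ is in fact $\pm1$–valued, hence fixed by $\sigma_{-1}$: by Milgram's formula $w^2=\cd A\cdot e(\sig(\fqm A)/4)$, and because $\sig(\fqm A)$ is even one has $e(\sig(\fqm A)/4)=(-1)^{\sig(\fqm A)/2}\in\Z$, so $w^2\in\Q$ and therefore $\chi_{\fqm A}(d)^2=\sigma_d(w^2)/w^2=1$. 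Consequently $\sigma_{-1}(\chi_{\fqm A}(d))=\chi_{\fqm A}(d)$, which closes the reduction and proves the lemma.

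The main obstacle is twofold. The first is the bookkeeping of the double Gauss sum, where one must complete the square twice and keep the non-degeneracy hypothesis in view. The second, more subtle, is the final identification: a direct evaluation naturally produces $\sigma_{-1}(\chi_{\fqm A}(d))$ rather than $\chi_{\fqm A}(d)$, and the two coincide \emph{only} because the evenness of the signature forces $w^2$ to be rational and $\chi_{\fqm A}$ to be a quadratic character. I would therefore isolate the rationality of $w^2$ as a short preliminary observation. As a consistency check, for $a=d=-1$ one has $D_{-1}=-I=S^2$, and both the formula and the direct computation of $\rho_{\fqm A}(S^2)$ give $e(-\sig(\fqm A)/4)\,\frake_{-x}$.
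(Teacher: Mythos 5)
Your proof is correct and follows essentially the same route as the paper's: factor off the unipotent part to isolate $e(bdQ(x))$, write the diagonal matrix as a word in $S$ and $T$, and evaluate the resulting Gauss sums via Milgram's formula. Your word for $\zxz a00d$ differs from the paper's and you are more explicit about the final identification (that $w^2\in\Q$ forces $\chi_{\fqm A}$ to be $\pm1$-valued, so the $\sigma_{-1}$-twist disappears), a step the paper leaves implicit but which its own computation also tacitly requires.
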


A careful analysis of $\chi_{\fqm A}$ yields
\begin{equation*}
  \chi_{\fqm A}(d)=
  \begin{cases}
    \leg d{\cd A} & \text{ if } \cd A \text{ is odd,}\\
    \leg d{\cd A} \leg {-4}d^{s}& \text { if } \cd A \text{ is even,}
  \end{cases}
\end{equation*}
where $s=\sig(\fqm A_2)/2$ (see e.g.~\cite[Lemma
3.9]{fredrik-weilfqm}). However, we shall not need this formula.

\begin{proof}[Proof of Lemma~\ref{lem:Gamma0-action}]
  Since $\zxz ab0d = \zxz a00d \zxz 1{bd}01$ and
  $\zxz 1{bd}01\frake_{x}=e(bdQ(x))\frake_{x}$ it suffices
  to consider the action of $\zxz a00d$.  For this we write
  \begin{equation*}
    \zxz a00d = S^{-1} \zxz 1{d}01 S \zxz 1{a}01 S \zxz 1{d}01
  \end{equation*}
  and apply the formulas for the action of $S$ and $T$ to obtain
  \begin{gather*}
    \zxz a00d \frake_{x} = \gamma\, \frake_{dx}, \intertext{where} \gamma
    = \sigma_d(w)/w , \qquad w=\sum_{x\in
      A}e\left(Q(x)\right)
    .
  \end{gather*}
  (Here we used Milgram's formula). This proves the lemma.
\end{proof}

For any $s$ in $(\Z/N\Z)^\times$, let $\sigma_s$ denote the
automorphism of $K_N$ which sends each $N$th root of unity $z$
to~$z^N$. For any endomorphism $f$ of $\C[A]$ which leaves invariant
$K_N[A]$, say $f\frake_x=\sum_{y\in A}f(x,y)\frake_y$ with $f(x,y)$
in~$K_N$, we use $\sigma_s(f)$ for the endomorphism of $\C[A]$ such
that
\begin{equation*}
  \sigma_s(f)\frake_x=\sum_{y\in A}\sigma_s\left(f(x,y)\right)\frake_y.
\end{equation*}
Note that $f\mapsto \sigma_s(f)$ defines an automorphism of the ring
of endomorphisms of $\C[A]$ which leave invariant $K_N[A]$.

\begin{lemma}
  \label{lem:Galois-intertwining}
  For any $\kzxz abcd$ in $G_N$, one has
  \begin{equation*}
    \sigma_s(\rho_{\fqm A}\left(\kzxz abcd\right))
    =
    \rho_{\fqm A}
    \left(\kzxz{a}{sb}{s^{-1}c}{d}\right)
    .
  \end{equation*}
\end{lemma}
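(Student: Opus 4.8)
The plan is to recognise both sides as honest representations of $G_N$ and then to check the asserted identity only on the generators $S$ and $T$. Introduce the map $\phi_s\colon G_N\to G_N$ sending $\kzxz abcd$ to $\kzxz{a}{sb}{s^{-1}c}{d}$; this is conjugation by $\diag(s,1)\in\GL_2(\Z/N\Z)$, an element normalising $G_N$, so $\phi_s$ is a group automorphism. Every matrix coefficient of $\rho_{\fqm A}(S)$ and $\rho_{\fqm A}(T)$ lies in $K_N$: this is clear for $T$, and for $S$ it follows once one observes that the normalising constant $e(-\sig(\fqm A)/8)/\sqrt{\cd A}$ equals $1/w$, where $w=\sum_{x\in A}e(Q(x))$ is a sum of $N$th roots of unity and hence lies in $K_N$ (here one uses Milgram's formula $w=\sqrt{\cd A}\,e(\sig(\fqm A)/8)$). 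Consequently each $\rho_{\fqm A}(g)$ preserves $K_N[A]$, so both $g\mapsto\sigma_s(\rho_{\fqm A}(g))$ and $g\mapsto\rho_{\fqm A}(\phi_s(g))$ are homomorphisms $G_N\to\Aut_\C(\C[A])$ --- the first because $f\mapsto\sigma_s(f)$ is a ring automorphism, the second because $\phi_s$ is a homomorphism. It therefore suffices to verify the equality on $S$ and $T$.

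On $T$ the verification is immediate: $\phi_s(T)=\kzxz 1s01$ acts by $\frake_x\mapsto e(sQ(x))\frake_x$, and likewise $\sigma_s(\rho_{\fqm A}(T))\frake_x=\sigma_s(e(Q(x)))\frake_x=e(sQ(x))\frake_x$, since $e(Q(x))$ is an $N$th root of unity and $\sigma_s$ raises such roots to the $s$th power. For $S$ I would write $\phi_s(S)=S\cdot\diag(s^{-1},s)$ and compute the right-hand side with Lemma~\ref{lem:Gamma0-action}: the diagonal factor contributes $\frake_x\mapsto\chi_{\fqm A}(s)\frake_{sx}$, and then applying $\rho_{\fqm A}(S)$ together with the bilinearity relation $Q(sx,y)=sQ(x,y)$ gives
\[
\rho_{\fqm A}(\phi_s(S))\frake_x=\chi_{\fqm A}(s)\,c\sum_{y\in A}e(-sQ(x,y))\frake_y,\qquad c=1/w.
\]
For the left-hand side, $\sigma_s$ turns each $e(-Q(x,y))$ into $e(-sQ(x,y))$ and sends $c=1/w$ to $1/\sigma_s(w)=\chi_{\fqm A}(s)^{-1}c$, so that $\sigma_s(\rho_{\fqm A}(S))\frake_x=\chi_{\fqm A}(s)^{-1}c\sum_{y\in A}e(-sQ(x,y))\frake_y$.

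The two expressions for the action of $S$ coincide exactly when $\chi_{\fqm A}(s)^2=1$, and proving this is the one genuine obstacle, to be handled without recourse to the explicit formula for $\chi_{\fqm A}$. I would argue as follows: Milgram's formula gives $w^2=\cd A\cdot e(\sig(\fqm A)/4)$, and since $\sig(\fqm A)$ is even by our standing assumption, $e(\sig(\fqm A)/4)=(-1)^{\sig(\fqm A)/2}=\pm1$, so $w^2\in\Q$. A rational number is fixed by every $\sigma_s$, whence $\chi_{\fqm A}(s)^2=\sigma_s(w^2)/w^2=1$. This yields $\chi_{\fqm A}(s)^{-1}=\chi_{\fqm A}(s)$, the computations on $S$ agree, and the lemma follows.
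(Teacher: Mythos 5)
Your proof is correct and follows essentially the same route as the paper's: reduce to the generators $S$ and $T$ using that both sides are multiplicative in $\kzxz abcd$, check $T$ directly, and handle $S$ by factoring $\kzxz{0}{-s}{s^{-1}}{0}=S\cdot\kzxz{s^{-1}}{0}{0}{s}$ and invoking Lemma~\ref{lem:Gamma0-action}. The only difference is that you make explicit the closing identity $\chi_{\fqm A}(s)^2=1$ (via $w^2=\cd{A}\,(-1)^{\sig(\fqm A)/2}\in\Q$, using that $\sig(\fqm A)$ is even), which the paper subsumes in the bare assertion $\sigma_s(w)/w=\chi_{\fqm A}(s)$; this is a worthwhile clarification rather than a divergence.
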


\begin{proof}
  Both sides of the claimed identity are multiplicative in
  $\kzxz abcd$ (for this note that the map
  $\kzxz abcd\mapsto \kzxz{a}{sb}{s^{-1}c}{d}$ defines a automorphism
  of~$G_N$). It suffices therefore to prove the claimed formula for the
  generators $T$ and $S$ of $G_N$. For $T$ the formula can be read off
  immediately from the formula for the action of $T$. For $S$ one has
  on the one hand side for any $x$ in $A$
  \begin{equation*}
    \sigma_s(\rho_{\fqm A}(S))\frake_x
    =
    \sigma_s(w)\sum_{y\in A} e\left(-sQ(x,y)\right) \frake_y
    ,
  \end{equation*}
  where $w= {e(-\sig(\fqm A)/8)}/{\sqrt{\cd A}}=\sum_{x\in A}e(-Q(x))/\cd A$. On the other hand,
  $\kzxz 0{-s}{s^{-1}}0=S\kzxz {s^{-1}}00s$, and hence, using
  Lemma~\ref{lem:Gamma0-action},
  \begin{equation*}
    \rho_{\fqm A}\left(\kzxz 0{s}{s^{-1}}0\right)\frake_x
    =
    \chi_{\fqm A}(s) w \sum_{y\in A} e\left(-Q(sx,y)\right) \frake_y
    .
  \end{equation*}
  But $\sigma_s(w)/w = \chi_{\fqm A}(s)$, which implies the claimed
  formula.
\end{proof}

\begin{proof}[Proof of Theorem~\ref{thm:ring-of-definition}]
  The $G_N$-invariant projection $\calP: \C[A] \to \C[A]^{G_N}$ is given by
  the formula
  \begin{equation*}
    \calP = \frac{1}{\cd {G_N}} \sum_{g \in G_N} \rho_{\fqm A}(g)
    .
  \end{equation*}
  It suffices to show that, for any $x$ in~$A$, we have
  $\calP\frake_x=\sum_{y}\calP(x,y)\frake_y$ with rational numbers
  $\calP(x,y)$, in other words, that we have, for any $s$ in
  $(\Z/N\Z)^\times$ the identity $\sigma_s(\calP)=\calP$. But this
  follows from Lemma~\ref{lem:Galois-intertwining} and the fact that
  $\kzxz abcd \mapsto \kzxz{a}{sb}{s^{-1}c}{d}$ permutes the elements
  of~$G_N$. This proves the theorem.
\end{proof}

\section{The algorithm}
\label{sec:algorithm}

In this section we explain our algorithm for computing a basis for the
space of invariants. We then discuss various easy and natural
improvements. We fix a finite quadratic module $\fqm A=(A,Q)$ of
level~$N$, and assume that $\sig(\fqm A)$ is even (since otherwise the
space of invariants of the associated Weil representation is
trivial). The Weil representation~$\rho_{\fqm A}$ is then a
representation of $G = \SL_2(\Z)$, which factors even through a
representation of $G_N =\SL_2(\Z/N\Z)$. Define
\begin{equation*}
  \sym{Iso}(\fqm A):=\left\{x\in A: Q(x)=0\right\},
\end{equation*}
and, for $v \in \C[A]$,
\begin{equation*}
  \supp(v) := \{x \in A\ :\ v(x) \neq 0 \}.
\end{equation*}
Note that, for any $G$-submodule $M$ of $\C[A]$, we have
\begin{equation*}
  M^T:=\left\{v\in M: \rho_{\fqm A}(T)v=v\right\}
  =
  \left\{v\in M:\sym{supp}(v)\subseteq \sym{Iso}(\fqm A)\right\}
\end{equation*}
as follows immediately from the formula for the action of $T$ in
Section~\ref{sec:fqm}.  Our algorithm is based on the following
observation.

\begin{proposition}
  \label{prop:main-observation}
  Let $M$ be a $G$-submodule of $\C[A]$. Then
  \begin{equation*}
    M^G = \left(1+\rho_{\fqm A}(S)+\rho_{\fqm A}(S)^2+\rho_{\fqm A}(S)^3\right)(M^T)\cap M^T.
  \end{equation*}
\end{proposition}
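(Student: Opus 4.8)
The plan is to reduce the statement to the elementary observation that the operator $P := 1+\rho_{\fqm A}(S)+\rho_{\fqm A}(S)^2+\rho_{\fqm A}(S)^3$ is, up to the scalar factor $4$, the averaging projector of $M$ onto the subspace of $\langle S\rangle$-invariants. Write $M^S := \{v\in M:\rho_{\fqm A}(S)v=v\}$. Since $S$ and $T$ generate $\SL_2(\Z)$, an element of $M$ is fixed by all of $G$ exactly when it is fixed by both $S$ and $T$; hence $M^G = M^S\cap M^T$. It therefore suffices to prove $M^S\cap M^T = P(M^T)\cap M^T$, and everything will follow from the two facts that $P$ lands in $M^S$ and that $P$ acts as $4$ on $M^S$.

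The first step I would record is that $\rho_{\fqm A}(S)^4=\id$. Indeed $S^4=I$ in $\SL_2(\Z)$, and since $\sig(\fqm A)$ is even the representation $\rho_{\fqm A}$ descends to a genuine representation of $\SL_2(\Z)$ (as explained at the end of Section~\ref{sec:fqm}), so $\rho_{\fqm A}(S)^4=\rho_{\fqm A}(I)=\id$. From this one computes immediately $\rho_{\fqm A}(S)\,P = P$, which yields the two properties used below: the image of $P$ is contained in $M^S$, and $P$ restricts to multiplication by $4$ on $M^S$.

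For the inclusion $P(M^T)\cap M^T\subseteq M^S\cap M^T$, I would argue that any element $Pu$ with $u\in M^T$ lies in $M$ (because $M$ is a $G$-submodule, so $\rho_{\fqm A}(S)$ preserves $M\supseteq M^T$) and is fixed by $\rho_{\fqm A}(S)$ by the identity $\rho_{\fqm A}(S)P=P$; thus $P(M^T)\subseteq M^S$, and intersecting with $M^T$ gives the claim. For the reverse inclusion, take $v\in M^S\cap M^T$. Then $Pv=4v$, so $v=P(\tfrac14 v)$ with $\tfrac14 v\in M^T$ (as $M^T$ is a subspace), whence $v\in P(M^T)$; since also $v\in M^T$, we obtain $v\in P(M^T)\cap M^T$.

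The argument is essentially formal once $P$ is identified with four times the projector onto $M^S$, so there is no serious obstacle to overcome. The only points that genuinely require attention are the verification that $\rho_{\fqm A}(S)^4=\id$ — which is precisely where the standing hypothesis that $\sig(\fqm A)$ is even is used — and the bookkeeping ensuring that $P(M^T)$ remains inside $M$, which relies on $M$ being a $G$-submodule rather than an arbitrary subspace.
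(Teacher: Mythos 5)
Your proof is correct and follows essentially the same route as the paper: both arguments rest on $\rho_{\fqm A}(S)^4=\id$ (valid under the standing assumption that $\sig(\fqm A)$ is even), identify $1+\rho_{\fqm A}(S)+\rho_{\fqm A}(S)^2+\rho_{\fqm A}(S)^3$ as four times the projector onto $M^S$, and combine $M^G=M^S\cap M^T$ with the two inclusions you spell out. The paper is merely terser, writing $M^S=\sym{Tr}_S(M)$ and $M^G=\sym{Tr}_S(M^G)\subseteq\sym{Tr}_S(M^T)$ and leaving the reverse inclusion as ``obvious,'' whereas you make both directions explicit.
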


\begin{proof}
  An element $v$ of $M$ is invariant under all of $G$ if it is
  invariant under the generators $T$ and $S$ of $G$, i.e.~if it is
  contained in $M^T$ and the set $M^S$ of vectors invariant under
  $\rho_{\fqm A}(S)$. Since $S^4=1$ we have $M^S=\sym{Tr}_S(M)$, where
  \begin{equation*}
    \sym{Tr}_S
    =
    1+\rho_{\fqm A}(S)+\rho_{\fqm A}(S)^2+\rho_{\fqm A}(S)^3
    .
  \end{equation*}
  But $M^G\subseteq M^T$, hence
  $M^G = \sym{Tr}_S(M^G)\subseteq\sym{Tr}_S(M^T)$, and therefore
  \begin{equation*}
    M^G=M^G\cap M^T\subseteq \sym{Tr}_S(M^T)\cap M^T
    .
  \end{equation*}
  The proposition is now obvious.
\end{proof}

The Proposition is quickly converted into a first version of our
algorithm:

\begin{algorithm}
  (Computing a basis for the space $\C[A]^G$ of invariants)
  \label{alg:algo-1}

  \begin{enumerate}
    \renewcommand\labelenumi{\arabic{enumi}.}
  \item Find the isotropic elements $a_1,\dots,a_m$ and the
    non-isotropic elements $b_1,\dots,b_n$ in $A$.
  \item Compute the $(m+n) \times m$ matrix $H$ such that
    \begin{equation*}
      (L\frake_{a_1},\dots,L\frake_{a_m})
      =
      (\frake_{a_1},\dots,\frake_{a_m},\frake_{b_1},\dots,\frake_{b_n})H
      ,
    \end{equation*}
    where
    $L=1+\rho_{\fqm A}(S)+\rho_{\fqm A}(S)^2+\rho_{\fqm A}(S)^3$.
  \item Let $U$ and $V$ be the matrices obtained by extraction the
    first $m$ and the last $n$ rows of $H$, respectively.
  \item Compute a basis $\mathfrak{V}$ for the space of vectors $x$
    such that $Vx=0$.
  \item Return a basis for the space of all $Ux$, where
    $x$ runs through the basis $\mathfrak{V}$ .
  \end{enumerate}

\end{algorithm}

For implementing this algorithm we need, first of all, to decide over
which field $K$ we would like to do the computations. One possibility
is to use floating point numbers to do a literal implementation using
the field of complex numbers. However, the matrix coefficients
of $\rho_{\fqm A}(S)$ with respect to the natural basis of $\C[A]$ are elements of the
$N$th cyclotomic field $K_N$. Hence it is reasonable to the
calculations over $K_N=\Q[x]/(\phi_N)$, where $\phi_N$ is the $N$th
cyclotomic polynomial. Another choice for $K$ will be discussed in
Section~\ref{sec:reduction-mod-p}.

There are two easy improvements which can help to reduce the computing
time. The first one is due to the following observation.
\begin{proposition}
  \label{prop:parity-restriction}
  The subspaces $\C[A]^+$ and $\C[A]^-$ of even and odd functions are
  $G$-submodules of $\C[A]$.  Let $\epsilon = (-1)^{\sig(\fqm A)/2}$.
  Then $\C[A]^G=(\C[A]^{\epsilon})^G$ and
  $(\C[A]^{-\epsilon})^G=\{0\}$.
\end{proposition}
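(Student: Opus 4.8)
The plan is to realize the parity operator as a scalar multiple of $\rho_{\fqm A}(S)^2$ and then exploit that $S^2=-\id$ is central in $\SL_2(\Z)$. Write $J$ for the linear involution of $\C[A]$ determined by $J\frake_x=\frake_{-x}$; its $(+1)$- and $(-1)$-eigenspaces are exactly the spaces $\C[A]^+$ and $\C[A]^-$ of even and odd functions. Thus the whole proposition follows once we understand how $J$ interacts with the $G$-action, and this is precisely what the identification with $\rho_{\fqm A}(S)^2$ will provide.

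The key computation is to evaluate $\rho_{\fqm A}(S)^2$. Applying the formula for the $S$-action twice gives
\[
  \rho_{\fqm A}(S)^2\frake_x
  =
  \frac{\e{-\sig(\fqm A)/4}}{\cd A}\sum_{y,z\in A}\e{-Q(x,y)-Q(y,z)}\frake_z .
\]
Using bilinearity to rewrite $Q(x,y)+Q(y,z)=Q(x+z,y)$ and summing over $y$ first, non-degeneracy of the bilinear form makes the inner character sum $\sum_{y}\e{-Q(x+z,y)}$ vanish unless $x+z=0$, in which case it equals $\cd A$. Hence $\rho_{\fqm A}(S)^2\frake_x=\e{-\sig(\fqm A)/4}\,\frake_{-x}$, that is $\rho_{\fqm A}(S)^2=\epsilon J$, where I use that $\sig(\fqm A)$ is even to pin down the phase as $\e{-\sig(\fqm A)/4}=(-1)^{\sig(\fqm A)/2}=\epsilon$.

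Next I would deduce the module statement. Since $\sig(\fqm A)$ is even, $\rho_{\fqm A}$ descends to $\SL_2(\Z)$ and $\rho_{\fqm A}(S)^2=\rho_{\fqm A}(S^2)$ is the image of the central element $S^2=-\id$; it therefore commutes with every $\rho_{\fqm A}(g)$. Consequently $J=\epsilon\,\rho_{\fqm A}(S)^2$ is a $G$-module endomorphism, and its eigenspaces $\C[A]^+$ and $\C[A]^-$ are $G$-submodules, which is the first assertion.

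Finally, for the location of the invariants, let $v\in\C[A]^G$. Then $\rho_{\fqm A}(S)^2v=v$, so $\epsilon Jv=v$, i.e. $Jv=\epsilon v$; thus $\C[A]^G\subseteq\C[A]^{\epsilon}$. Since $\C[A]^{\epsilon}$ is a $G$-submodule this gives $\C[A]^G=(\C[A]^{\epsilon})^G$, while any $v\in(\C[A]^{-\epsilon})^G$ would satisfy both $Jv=-\epsilon v$ and $Jv=\epsilon v$, forcing $v=0$. The only genuinely computational point is the evaluation of $\rho_{\fqm A}(S)^2$ together with the bookkeeping that identifies the phase $\e{-\sig(\fqm A)/4}$ with $\epsilon$; everything else is formal.
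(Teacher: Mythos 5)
Your proof is correct and rests on the same key fact as the paper's, namely that $\rho_{\fqm A}(S)^2\frake_x=\epsilon\,\frake_{-x}$, from which the location of the invariants follows exactly as in the paper. The only (harmless) difference is in the first assertion: the paper checks directly that $\frake_x\mapsto\frake_{-x}$ intertwines the $S$- and $T$-actions, whereas you obtain the same conclusion by identifying this map with $\epsilon\,\rho_{\fqm A}(S^2)$ and invoking the centrality of $S^2=-\id$ in $\SL_2(\Z)$.
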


\begin{proof}
  The first statement follows immediately from the observation that
  the map $\frake_a\mapsto \frake_{-a}$ intertwines with the action
  of~$S$ and $T$, and hence with the action of~$G$, as is obvious from
  the formulas for the action of $S$ and $T$.

  For the proof of the second statement we note that
  $S^2\frake_a = \epsilon\frake_{-a}$ which is again an immediate
  consequence of the formula for the action of~$S$. In other words,
  any invariant~$v$ satisfies $v(a)=(S^2v)(a)=\epsilon v(-a)$ for all
  $a$ in~$A$.
\end{proof}

Let $\rho_{\fqm A}^\pm:G\rightarrow \GL\left(\C[A]^\pm\right)$
afforded by the $G$-modules~$\C[A]^\pm$. As we saw in the proof of the
preceding proposition $S^2$ acts on $\C[A]^\epsilon$
($\epsilon = (-1)^{\sig(\fqm A)/2}$) as identity,
i.e.~$\rho_{\fqm A}^\epsilon(S^2)=1$. Using this
Propositions~\eqref{prop:main-observation},~\eqref{prop:parity-restriction}
imply
\begin{equation*}
  \C[A]^G
  =
  \C[A]^\epsilon
  =
  \left\{v\in \left(1+\rho_{\fqm A}^\epsilon(S)\right)(\C[A]^\epsilon):
    \sym{supp}(v)\subseteq \sym{Iso}(\fqm A)\right\}
  .
\end{equation*}
A basis for $\C[A]^\epsilon$ is obtained by replacing in the standard
basis $\frake_a$ by
$\frake_a^\epsilon = \frac 12\left(\frake_a+\epsilon
  \frake_{-a}\right)$
and omitting all zeroes and all duplicated vectors.  This leads to the
following modified algorithm.
\begin{algorithm}
  (Modified algorithm for computing a basis for the space of
  invariants)
  \label{alg:algo-2}

  \begin{enumerate}
    \renewcommand\labelenumi{\arabic{enumi}.}
  \item As in Algorithm~\ref{alg:algo-1}.
  \item [2.a] Construct the basis $\frake_{a_i}^\epsilon$,
    $\frake_{b_j}^\epsilon$ ($1\le i\le m'$, $1\le j\le n'$) of
    $\C[A]^\epsilon$ obtained from the standard basis $\frake_{a_i}$,
    $\frake_{b_j}$ by (anti-)symmetrizing, suppressing zeroes and
    duplicates, and after possibly renumbering the~$a_i$ and~$b_j$.
  \item [2.b] Compute the $(m'+n') \times n'$ matrix $H'$ such that
    \begin{equation*}
      (L\frake_{a_1}^\epsilon,\dots,L\frake_{a_{m'}}^\epsilon)
      =
      (\frake_{a_1}^\epsilon,\dots,\frake_{a_{m'}}^\epsilon,\frake_{b_1}^\epsilon,\dots,\frake_{b_{n'}}^\epsilon)H'
      ,
    \end{equation*}
    where $L=1+\rho_{\fqm A}^\epsilon(S)$.
  \item [3.--5.]  As in Algorithm~\ref{alg:algo-1} with $H$, $m$, $n$
    replaced by $H'$, $m'$, $n'$.
  \end{enumerate}

\end{algorithm}

The dimension of $\C[A]^{\pm}$ equals
$\frac 12\left(\cd A+\cd {A[2]}\right)$, where $A[2]$ denotes the
subgroup of elements annihilated by ``multiplication by~$2$''. Note
that $A[2]=\{0\}$ if $\cd A$ is odd. Therefore the size of $H'$ is
about half of the size of~$H$ in Algorithm~\ref{alg:algo-1}. Also note
that $H'$ has entries in the totally real subfield $K_N^+$ of
$K_N$. This implies that $\C[A]^G$ is in fact defined over $K_N^+$ and
we can perform our computations over $K_N^+$ instead of~$K_N$.

  To implement the algorithm, we still need an explicit formula for the entries of the matrix $H' = (h_{ij})$,
  where $1 \leq i, j \leq m' + n'$. We just write $x_i = a_i$ for $1 \leq i \leq m'$ and
  $x_i = b_{i-m'}$ for $m' < i \leq m'+n'$ for the elements of $A$.
  By a straightforward calculation, we obtain
  \begin{align*}
    h_{ij} &= f_{i}^{-1} \left\langle\rho_{\fqm A}(S)\frake_{x_j}^\eps + \frake_{x_j}^\eps, \frake_{x_i}^\eps\right\rangle \\
             &=  \frac{e(-\sig(A)/8)}{2f_i\sqrt{\cd A}}(e(-Q(x_j,x_i)) +\eps e(Q(x_j,x_i))) + \delta_{i,j},
  \end{align*}
  where $f_i = \langle \frake_{x_i}^\eps,  \frake_{x_i}^\eps \rangle$ and $\langle \cdot, \cdot \rangle$ denotes the standard hermitean inner product on $\C[A]$ (conjugate-linear in the second component), such that $\langle \frake_x, \frake_y \rangle = \delta_{x,y}$.
 Note that $f_i = \frac{1}{2}$ if $x_i \neq -x_i$ and $f_i = 1$, otherwise.

Given a finite quadratic module the exact value of quantity
$\sig(\fqm A)$ is not immediately clear. For finding the $\epsilon$ of
the preceding proposition the following is helpful.
\begin{proposition}
  For odd $\cd A$ one has
  \begin{equation*}
    (-1)^{\sig(\fqm A)/2} = \leg {-1}{\cd A}
    .
  \end{equation*}    
\end{proposition}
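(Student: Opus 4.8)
The plan is to reduce the statement to Milgram's formula together with the elementary theory of quadratic Gauss sums. Write $w=\sum_{x\in A}e(Q(x))$, so that Milgram's formula reads $w=\sqrt{\cd A}\,e(\sig(\fqm A)/8)$ with $\sqrt{\cd A}>0$. Since $\cd A$ is odd, the level $N$ of $\fqm A$ is odd (the primes dividing $N$ are exactly those dividing $\cd A$), and hence $\sig(\fqm A)$ is even, as recalled in Section~\ref{sec:fqm}; write $\sig(\fqm A)=2s$. Then $e(\sig(\fqm A)/8)=e(s/4)=i^{s}$, so $w^2=\cd A\,(-1)^{s}=\cd A\,(-1)^{\sig(\fqm A)/2}$. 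Thus the proposition is \emph{equivalent} to the algebraic identity
\begin{equation*}
  w^2 = \leg{-1}{\cd A}\,\cd A,
\end{equation*}
in which all fourth and eighth roots of unity have disappeared.

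Next I would exploit multiplicativity. If $\fqm A=\fqm B\perp\fqm C$ is an orthogonal direct sum, then $w_{\fqm A}=w_{\fqm B}\,w_{\fqm C}$, while $\cd A=\cd B\,\cd C$ and $\leg{-1}{\cd A}=\leg{-1}{\cd B}\leg{-1}{\cd C}$ by complete multiplicativity of the Jacobi symbol in its lower entry. Hence the displayed identity is stable under orthogonal sums, and it suffices to verify it on the building blocks. By the structure theory of finite quadratic modules of odd order (every such module is an orthogonal sum of cyclic ones; see \cite{Wall-1}, \cite{Nikulin}), I may therefore assume $A=\Z/p^{k}\Z$ with $p$ odd and $Q(x)=ax^2/p^{k}$ for a unit $a$.

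The heart of the matter is then the classical quadratic Gauss sum $w=\sum_{x\bmod p^k}e(ax^2/p^k)$. Here I would use two standard facts: first, the substitution $x\mapsto tx$ by a unit $t$ shows that $w$ depends only on $a$ modulo squares and yields $w=\leg{a}{p^k}\,w_0$ with $w_0=\sum_{x}e(x^2/p^k)$; second, $|w|^2=w\bar w=p^k$. Since complex conjugation replaces $a$ by $-a$, the first fact gives $\bar w=\leg{-a}{p^k}w_0=\leg{-1}{p^k}\,w$, and combining with the second yields $p^k=w\bar w=\leg{-1}{p^k}\,w^2$, that is $w^2=\leg{-1}{p^k}\,p^k$, exactly the required identity. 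The one genuinely non-formal input is the sign in $\bar w=\leg{-1}{p^k}w$, i.e.\ the classical evaluation of the quadratic Gauss sum; this is the step I expect to be the only real obstacle, everything else being bookkeeping.

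Finally, I note a shortcut that bypasses both the decomposition and the Gauss-sum evaluation. In the notation of Lemma~\ref{lem:Gamma0-action} one has, directly from the definition of $\sigma_{-1}$, the equality $\sigma_{-1}(w)=\sum_{x}e(-Q(x))=\bar w$; hence $\chi_{\fqm A}(-1)=\sigma_{-1}(w)/w=\bar w/w$. Feeding in the displayed formula $\chi_{\fqm A}(d)=\leg{d}{\cd A}$ for odd $\cd A$ gives $\bar w=\leg{-1}{\cd A}\,w$ at once, and combined with $w\bar w=\cd A$ this again yields $w^2=\leg{-1}{\cd A}\,\cd A$, whence the proposition. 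I would present the Gauss-sum route as the main argument and record this as an alternative, since it relies on the explicit evaluation of $\chi_{\fqm A}$ that the text deliberately avoids elsewhere.
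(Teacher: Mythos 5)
Your argument is correct, but it takes a genuinely different route from the paper. The paper's proof is a two-line representation-theoretic argument: applying the $S$-action twice gives $S^2\frake_x=(-1)^{\sig(\fqm A)/2}\frake_{-x}$, while Lemma~\ref{lem:Gamma0-action} applied to $S^2=\zxz{-1}{0}{0}{-1}$ gives $S^2\frake_x=\chi_{\fqm A}(-1)\frake_{-x}$, and the value $\chi_{\fqm A}(-1)=\leg{-1}{\cd A}$ is then read off from the quoted explicit formula for $\chi_{\fqm A}$. Your ``alternative shortcut'' is essentially this argument in disguise (the relation $(-1)^{\sig(\fqm A)/2}=w^2/\cd{A}=w/\bar w$ that you extract from Milgram's formula is exactly what the paper extracts from $S^2$), so you have in effect rediscovered the paper's proof as a remark. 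Your main route is more self-contained: it avoids the unproved formula for $\chi_{\fqm A}$ from \cite{fredrik-weilfqm} entirely, reduces the claim via Milgram to the identity $w^2=\leg{-1}{\cd A}\cd{A}$, observes that this identity is multiplicative under orthogonal sums, and verifies it on cyclic blocks by the classical evaluation of quadratic Gauss sums; the price is invoking the structure theory of finite quadratic modules of odd order and the Gauss-sum evaluation. One small caveat on the latter: the substitution $x\mapsto tx$ alone only shows that $\sum_x e(ax^2/p^k)$ depends on the square class of $a$; to obtain the full identity $w=\leg{a}{p^k}w_0$ (equivalently the sign in $\bar w=\leg{-1}{p^k}w$) one also needs the standard reduction $\sum_{x\bmod p^k}e(ax^2/p^k)=p\sum_{x\bmod p^{k-2}}e(ax^2/p^{k-2})$ together with the prime case. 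You correctly flag this as the one non-formal input, and it is exactly the arithmetic content that the paper hides inside the formula for $\chi_{\fqm A}$, so both proofs ultimately rest on the same classical fact.
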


\begin{proof}
  Indeed, directly from the formula for the $S$-action we obtain
  $S^2\frake_x = (-1)^{\sig(\fqm A)/2}\frake_{-x}$. On the other hand
  $S^2=\zxz {-1}00{-1}$, and therefore we obtain by Lemma~\ref{lem:Gamma0-action}
  that $S^2 \frake_x = \chi_{\fqm A}(-1)\frake_{-x}$. For odd $\cd A$ it is
  then easy to deduce from the formula of the lemma for
  $\chi_{\fqm A}$ that $\chi_{\fqm A}(-1)=\leg {-1}{\cd A}$ (see also
  the remark after Lemma~\ref{lem:Gamma0-action}).
\end{proof}

The second possible improvement is the factorization into local
components as explained in
Proposition~\ref{prop:local-decomposition}. We compute first the local
components $\fqm A_p :=(A_p,Q|_{A_p})$, and apply then
Algorithm~\ref{alg:algo-2} to the finite quadratic modules $\fqm A_p$.
If the number of different primes in $\cd A$ is large this reduces the
run-timeof our algorithm enormously. Indeed, the two bottle necks of
our algorithm are the search for the isotropic elements in~$A$ and the
computation of the kernel of a matrix of size $m\times \cd A$, where
$m$ is the number of isotropic elements of $A$. If $\cd A$ contains
more than two different primes, say
$\cd A = p_1^{k_1}\cdots p_r^{k_r}$ with $r\ge 2$, then it takes
$p_1^{k_1}\cdots p_r^{k_r}$ many search steps to find all isotropic
elements in~$A$, whereas an application of
Proposition~\ref{prop:local-decomposition} allows us to dispense with
$p_1^{k_1}+\cdots +p_r^{k_r}$ many search steps to find eventually all
invariants of $A$. A similar comparison applies to the size of the
matrices in our algorithm when run either directly on $A$ or else
separately on the $p$-parts $A_{p_j}$.

\section{Reduction mod $\ell$}
\label{sec:reduction-mod-p}
In this section we fix again a finite quadratic module $\fqm A=(A,Q)$
of level $N$. Let $\ell$ denote a prime such that
$\ell\equiv 1\bmod N$. Then $\Q_\ell$ contains the $N$th roots of
unity, hence the $N$th cyclotomic field. Accordingly, we can
consider~$\rho_{\fqm A}$ as a representation of $G_N = \SL_2(\Z/N\Z)$
taking values in $\GL(\Q_\ell[A])$, and $\Q_\ell[A]$ as
$G_N$-module. From the formulas for the action of $S$ and $T$ on
$\Q_\ell[A]$ it is clear that $\Z_\ell[A]$ is invariant under~$G_N$, and
that the $\Z_\ell$-rank of $\Z_\ell[A]^{G_N}$ equals the dimension of
$\C[A]^{G_N}$.

For computing the rank of $\Z_\ell[A]^{G_N}$ it is natural to consider the
reduction modulo~$\ell$ of $\Z_\ell[A]$. More precisely, note that
$\ell\Z_\ell[A]$ is a $G_N$-submodule of $\Z_\ell[A]$, so that we have
the exact sequence of $G_N$-modules
\begin{equation*}
  0\longrightarrow
  \ell\Z_\ell[A]
  \longrightarrow
  \Z_\ell[A]
  \xrightarrow{\ r\ }
  \F_\ell[A]
  \longrightarrow 0,
\end{equation*}
where $r$ denotes the reduction map $r(f):a \mapsto f(a)+\ell\Z_\ell$.
Here the action of $G_N$ on $\F_\ell[A]\cong \Z_\ell[A]/\ell\Z_\ell[A]$
is the one induced by the action on~$\Z_\ell[A]$.

\begin{theorem}
  \label{thm:reduction-theorem}
  Suppose that $(N,\ell) \neq (2,3)$. Then
  \begin{equation*}
    \dim_{\Q_\ell} \Q_{\ell}[A]^{G_N} = \dim_{\F_\ell} \F_{\ell}[A]^{G_N}.
  \end{equation*}
\end{theorem}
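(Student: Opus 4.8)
The plan is to use that the hypothesis forces $\ell$ to be coprime to $\cd{G_N}$, which makes the formation of $G_N$-invariants an exact functor over $\Z_\ell$ and hence compatible with reduction modulo $\ell$. Write $d:=\dim_{\Q_\ell}\Q_\ell[A]^{G_N}$, which equals the $\Z_\ell$-rank of the $G_N$-stable lattice $\Z_\ell[A]^{G_N}$ (as recorded before the theorem). The whole statement will follow once we know that $\ell\nmid\cd{G_N}$, for then both the averaging (Reynolds) projector $\calP=\tfrac{1}{\cd{G_N}}\sum_{g\in G_N}\rho_{\fqm A}(g)$ and its reduction modulo $\ell$ are genuine projectors onto the respective invariant subspaces; equivalently, $H^{i}(G_N,M)=0$ for $i>0$ and every $\Z_\ell[G_N]$-module $M$.

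The main (and essentially the only non-formal) step is the arithmetic lemma: if $\ell\equiv 1\bmod N$ is prime and $(N,\ell)\neq(2,3)$, then $\ell\nmid\cd{G_N}$. Here I would use the order formula $\cd{G_N}=\cd{\SL_2(\Z/N\Z)}=N^{3}\prod_{p\mid N}(1-p^{-2})$, whose prime divisors are the primes dividing $N$ together with those dividing $\prod_{p\mid N}(p-1)(p+1)$. Since $\ell\equiv 1\bmod N$ we have $\gcd(\ell,N)=1$ and, for $N\geq 2$, $\ell\geq N+1$. For any prime $p\mid N$ one then has $p-1<\ell$, so $\ell\nmid p-1$, while $0<p+1\leq N+1\leq\ell$ forces any divisibility $\ell\mid p+1$ to be the equality $\ell=p+1$. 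Feeding $\ell=p+1$ back into $\ell\equiv1\bmod N$ gives $N\mid p$, hence $N=p$, and two consecutive primes $p,\,p+1$ leave only $p=2$, $\ell=3$, i.e.\ the excluded pair. I expect this elementary case analysis to be the crux, since it is precisely where the exceptional $(2,3)$ appears.

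Granting $\ell\nmid\cd{G_N}$, the conclusion is formal. The excerpt supplies the short exact sequence of $G_N$-modules
\[
0\longrightarrow \ell\Z_\ell[A]\longrightarrow \Z_\ell[A]\xrightarrow{\ r\ }\F_\ell[A]\longrightarrow 0,
\]
in which $r$ is $G_N$-equivariant and each $\rho_{\fqm A}(g)$ preserves $\Z_\ell[A]$. Applying the now-exact functor $(-)^{G_N}$ (equivalently, using that the connecting map into $H^{1}(G_N,\ell\Z_\ell[A])=0$ vanishes) yields the short exact sequence
\[
0\longrightarrow \ell\,\Z_\ell[A]^{G_N}\longrightarrow \Z_\ell[A]^{G_N}\xrightarrow{\ r\ }\F_\ell[A]^{G_N}\longrightarrow 0,
\]
where I have used $(\ell\Z_\ell[A])^{G_N}=\ell\,\Z_\ell[A]^{G_N}$ because $\ell$ acts as a non-zero-divisor scalar. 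Thus $\F_\ell[A]^{G_N}\cong \Z_\ell[A]^{G_N}/\ell\,\Z_\ell[A]^{G_N}$; since $\Z_\ell[A]^{G_N}$ is free of rank $d$ over the local ring $\Z_\ell$, this quotient has $\F_\ell$-dimension $d$. Hence $\dim_{\F_\ell}\F_\ell[A]^{G_N}=d=\dim_{\Q_\ell}\Q_\ell[A]^{G_N}$, which is the assertion. (For $N=1$ the statement is trivial, as $\cd{G_1}=1$.)
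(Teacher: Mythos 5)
Your proposal is correct and follows essentially the same route as the paper: the arithmetic verification that $\ell\equiv 1\bmod N$ and $(N,\ell)\neq(2,3)$ force $\ell\nmid\cd{G_N}$, followed by vanishing of $H^1(G_N,\ell\Z_\ell[A])$ to split off the invariants of the reduction sequence and a rank count using freeness of $\Z_\ell[A]^{G_N}$. No substantive differences.
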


\begin{remark}
  Numerical computed examples suggest that the theorem is also true for $N=2$ and $\ell = 3$.
  However, we did not try to pursue this further.
\end{remark}

\begin{proof}[Proof of Theorem~\ref{thm:reduction-theorem}]
  From the short exact sequence preceding the theorem we obtain the
  long exact sequence in cohomology
  \begin{equation*}
    0\longrightarrow
    \ell\Z_\ell[A]^{G_N}
    \longrightarrow
    \Z_\ell[A]^{G_N}
    \xrightarrow{\ r\ }
    \F_\ell[A]^{G_N}
    \longrightarrow
    \sym{H}^1(G_N,\ell\Z_\ell[A])
    \longrightarrow \dots.
  \end{equation*}
  We shall show in a moment that the order of $G_N$ is a unit of $\Z_\ell$. Hence, the
  cohomology group $\sym{H}^1(G_N,\ell\Z_\ell[A])$ is trivial~\cite[Corollary 10.2]{BrownCohom}.
  It follows then that $\F_\ell[A]^{G_N}\cong \Z_\ell[A]^{G_N}/\ell\Z_\ell[A]^{G_N}$. Since
  $\Z_\ell[A]^{G_N}$ is free we conclude that $\dim_{\F_\ell} \F_\ell[A]^{G_N}$ equals the
  $\Z_\ell$-rank of $\Z_\ell[A]^{G_N}$, which implies the proposition.

  For proving that $\cd G_N$ is not divisible by $\ell$, first note that $\ell \equiv 1 \bmod{N}$ implies that $\ell > N$.
  Then, recall that the order of $G_N = \SL_2(\Z/N\Z)$ is given by
  \[
    \cd G = N^3 \prod_{p \mid N}\frac{p^2-1}{p^2}.
  \]
  Hence, if $\ell \mid \cd G$, we have that there is a prime $p \mid N$, such that $\ell \mid p+1$ or $\ell \mid p-1$. 
  However, $p-1 < N < \ell$ and thus the only possibility is $\ell = p+1$ and $N=p$. Since $\ell$ and $p$ are primes we conclude $N=2$ and $\ell = 3$, which we excluded in the statement of the proposition.
\end{proof}

The results on reduction modulo $\ell$ are not only interesting from a
theoretical point of view.  Our implementation profits tremedously
from reduction modulo a suitable prime $\ell$ as it speeds up the
calculation in practice. The reason is that there are higly optimized
libraries for computation with matrices over finite fields (and/or
over the integers) available.  For instance, in \texttt{sage} (which
uses the linbox library default), computing the nullity of a random
$200 \times 200$ matrix with entries in a cyclotomic field
$\Q(\zeta_{11})$ takes about $4$ seconds on our test machine, whereas
computing the nullity of a $2000 \times 2000$ matrix over $\F_{23}$
takes about $600$ milliseconds.  This immediately speeds up the
computation of the dimension of $\C[A]^G$ although it does not give a
basis for $\C[A]^G$.

\section{Tables}
\label{sec:tables}

  Tables~\ref{tab:21} to~\ref{tab:52} list the values
  $s = \sig(\fqm A)$ and
  dimension $d = \dim \C[A]^{\SL_2(\Z)}$ for various $p$-modules
  $\fqm A=(A,Q)$, where $p=2,3,5$. We use {\em genus symbols} for
  denoting isomorphism classes of finite quadratic modules
  (see~\cite{Skoruppa, bef-simple}).
  In short, for a power $q$ of an odd
  prime $p$ and a nonzero integer $d$ the symbol $q^{d}$ stands for
  the quadratic module
  \begin{equation*}
    \left(\left(\Z/q\Z\right)^{k}, \frac {x_1^2+\cdots+x_{k-1}^2+ax_k^2}q\right)
    ,
  \end{equation*}
  where $k=|d|$ and $a$ is an integer such that
  $\leg {2a}p = \sym{sign}(d)$.  For a $2$-power $q=2^e$, we have the following symbols:
  We write $q^{d}_{a}$ for the module
  \begin{equation*}
    \left(\left(\Z/q\Z\right)^{k}, \frac {x_1^2+\cdots+x_{k-1}^2+a x_k^2}{2q}\right),
  \end{equation*}
  with $k = |d|$ and $\leg{a}{2} = \sym{sign}(d)$.
  We normalize $a$ to be contained in the set $\{1,3,5,7\}$ and if $q = 2$, we take $a \in \{1,7\}$.
  Finally, we write $q^{+2k}$ for
  \begin{equation*}
    \left(\left(\Z/q\Z\right)^{2k}, \frac {x_1x_2+\cdots+x_{k-1}x_k}q\right),
  \end{equation*}
  and $q^{-2k}$ for
  \begin{equation*}
    \left(\left(\Z/q\Z\right)^{2k}, \frac {x_1x_2+\cdots+x_{k-3}x_{k-2} + x_{k-1}^2 + 2x_{k-1}x_{k} + x_k^2}q\right).
  \end{equation*}

  The concatenation of such symbols stands for the direct sum of the
  corresponding modules. For instance, $3^{-1}9^{+1}27^{-2}$ denotes the finite quadratic module
  \begin{equation*}
    \left(\Z/3\Z\times \Z/9\Z\times(\Z/27\Z)^2,\frac {x^2}3-\frac {y^2}9+\frac {z^2-w^2}{27}\right).
  \end{equation*}
  It can be shown that every finite quadratic
  $p$-module is isomorphic to a module which can be described by such
  symbols, and that this description is essentially unique (up to some
  ambiguities for $p=2$). For details of this we refer to~\cite{Skoruppa}.

  For the computations we used~\cite{Sage}, the additional
  package~\cite{fqm-p} and our implementation of Algorithm \ref{alg:algo-2},
  which is available as part of the package \cite{sfqm-p}.

\begin{table}[h]
  \caption{Dimension $d = \dim_\C \C[A]^G$ for some $2$-modules of even signature $s$\label{tab:21}}
  \begin{tabularx}{\linewidth}{lX |lX |lX |lXX} \toprule
$\fqm A$ & $d$ & $\fqm A$ & $d$ & $\fqm A$ & $d$ & $\fqm A$ & $d$ & $s$ \\
$s=0$ &  & $s=4$ &  & $s=0$ &  &  &  &  \\
\midrule
 $2^{+2}$ & 2 & $2^{-2}$ & 0 & $4^{+2}$ & 3 & $4^{-8}$ & 1191 & 0 \\
 $2^{+4}$ & 5 & $2^{-4}$ & 1 & $4^{+4}$ & 16 & $2_0^{+2}$ & 1 & 0 \\
 $2^{+6}$ & 15 & $2^{-6}$ & 7 & $4^{+6}$ & 141 & $2_2^{+2}$ & 0 & 2 \\
 $2^{+8}$ & 51 & $2^{-8}$ & 35 & $4^{+8}$ & 1711 & $2_0^{+4}$ & 2 & 0 \\
 $2^{+10}$ & 187 & $2^{-10}$ & 155 & $4^{-2}$ & 1 & $2_4^{+4}$  & 0 & 4 \\
 $2^{+12}$ & 715 & $2^{-12}$ & 651 & $4^{-4}$ & 6 & $2_6^{+6}$ & 0 & 6 \\
 $2^{+14}$ & $2795$  &  $2^{-14}$ & $2667$  & $4^{-6}$ & 73 & $2_0^{+6}$  & 5  & 0  \\
 \bottomrule
  \end{tabularx}\vspace{2mm}
\end{table}

\begin{table}[h]
  \caption{Dimension $d = \dim_\C \C[A]^G$ for some $2$-modules of even signature $s$\label{tab:22}}
  \begin{tabularx}{\linewidth}{lX |lX |lX} \toprule
$\fqm A$ & $d$ & $\fqm A$ & $d$ & $\fqm A$ & $d$ \\
$s=0$ &  & $s=4$ &  & $s=2$ &  \\
\midrule
 $2^{+2}4^{+2}$ & 8 & $2^{+2}8^{-2}$ & 1 & $2^{+4}4_2^{+2}$ & 0 \\
 $2_0^{+2}4^{+2}$ & 4 & $2_0^{+2}8^{-2}$ & 0 & $2_0^{+4}4_2^{+2}$ & 0 \\
 $2^{+4}4^{+2}$ & 25 & $2^{+4}8^{-2}$ & 7 & $4_2^{+4}$ & 1 \\
 $2^{+2}8^{+2}$ & 11 & $2_0^{+4}8^{-2}$ & 1 & $2^{+2}4_2^{+4}$ & 4 \\
 $2_0^{+4}4^{+2}$ & 11 & $4^{+2}8^{-2}$ & 2 & $2_0^{+2}4_2^{+4}$ & 4 \\
 $2_7^{+1}4^{+2}8_1^{+1}$ & 4 & $4_0^{+2}8^{-2}$ & 4 & $4_2^{+2}8^{+2}$ & 3 \\
 $2_5^{-1}4^{+2}8_3^{-1}$ & 4 & $2^{+2}4^{-2}8^{-2}$ & 1 & $4_1^{+3}16_1^{+1}$ & 1 \\
 $2_0^{+2}8^{+2}$ & 6 & $2_0^{+2}4^{-2}8^{-2}$ & 3 & $4_7^{-3}16_3^{-1}$ & 1 \\
 \bottomrule
  \end{tabularx}\vspace{2mm}
\end{table}

\begin{table}[h]
  \caption{Dimension $d = \dim_\C \C[A]^G$ for some $3$-modules of signature $s$\label{tab:31}}
  \begin{tabularx}{\linewidth}{lX |lX |lX |lXX} \toprule
$\fqm A$ & $d$ & $\fqm A$ & $d$ & $\fqm A$ & $d$ & $\fqm A$ & $d$ & $s$ \\
$s=6$ &  & $s=2$ &  & $s=0$ &  &  &  &  \\
\midrule
 $3^{+1}$ & 0 & $3^{-1}$ & 0 & $9^{+1}$ & 1 & $27^{+1}$ & 0 & 6 \\
 $3^{-2}$ & 2 & $3^{+2}$ & 0 & $9^{+2}$ & 1 & $27^{+2}$ & 0 & 4 \\
 $3^{+3}$ & 1 & $3^{-3}$ & 1 & $9^{+3}$ & 5 & $27^{+3}$ & 5 & 2 \\
 $3^{-4}$ & 1 & $3^{+4}$ & 7 & $9^{+4}$ & 33 & $27^{-1}$ & 0 & 2 \\
 $3^{+5}$ & 10 & $3^{-5}$ & 10 & $9^{+5}$ & 121 & $27^{-2}$ & 4 & 0 \\
 $3^{-6}$ & 40 & $3^{+6}$ & 22 & $9^{-1}$ & 1 & $27^{-3}$ & 5 & 6 \\
 $3^{+7}$ & 91 & $3^{-7}$ & 91 & $9^{-2}$ & 3 & $81^{+1}$ & 1 & 0 \\
 $3^{-8}$ & 247 & $3^{+8}$ & 301 & $9^{-3}$ & 5 & $81^{+2}$ & 1 & 0 \\
 $3^{+9}$ & 820 & $3^{-9}$ & 820 & $9^{-4}$ & 11 & $81^{-1}$ & 1 & 0 \\
 $3^{-10}$ & 2542 & $3^{+10}$ & 2380 & $9^{-5}$ & 121 & $81^{-2}$ & 5 & 0 \\
 \bottomrule
  \end{tabularx}\vspace{2mm}
\end{table}

\begin{table}[h]
  \caption{Dimension $d = \dim_\C \C[A]^G$ for some $3$-modules of signature $s$\label{tab:32}}
  \begin{tabularx}{\linewidth}{lX |lX |lX |lX} \toprule
$\fqm A$ & $d$ & $\fqm A$ & $d$ & $\fqm A$ & $d$ & $\fqm A$ & $d$ \\
$s=6$ &  & $s=2$ &  & $s=6$ &  & $s=2$ &  \\
\midrule
 $3^{+1}27^{-1}$ & 2 & $3^{+1}27^{+1}$ & 0 & $3^{+1}243^{-1}$ & 2 & $3^{+1}243^{+1}$ & 0 \\
 $3^{-2}27^{-1}$ & 1 & $3^{-2}27^{+1}$ & 1 & $3^{-2}243^{-1}$ & 1 & $3^{-2}243^{+1}$ & 1 \\
 $3^{+3}27^{-1}$ & 1 & $3^{+3}27^{+1}$ & 7 & $3^{+3}243^{-1}$ & 1 & $3^{+3}243^{+1}$ & 7 \\
 $3^{-4}27^{-1}$ & 10 & $3^{-4}27^{+1}$ & 10 & $3^{-4}243^{-1}$ & 10 & $3^{-4}243^{+1}$ & 10 \\
 $3^{+5}27^{-1}$ & 40 & $3^{+5}27^{+1}$ & 22 & $3^{+5}243^{-1}$ & 40 & $3^{+5}243^{+1}$ & 22 \\
 $3^{-1}27^{+1}$ & 2 & $3^{-1}27^{-1}$ & 0 & $3^{-1}243^{+1}$ & 2 & $3^{-1}243^{-1}$ & 0 \\
 $3^{+2}27^{+1}$ & 1 & $3^{+2}27^{-1}$ & 1 & $3^{+2}243^{+1}$ & 1 & $3^{+2}243^{-1}$ & 1 \\
 $3^{-3}27^{+1}$ & 1 & $3^{-3}27^{-1}$ & 7 & $3^{-3}243^{+1}$ & 1 & $3^{-3}243^{-1}$ & 7 \\
 $3^{+4}27^{+1}$ & 10 & $3^{+4}27^{-1}$ & 10 & $3^{+4}243^{+1}$ & 10 & $3^{+4}243^{-1}$ & 10 \\
 $3^{-5}27^{+1}$ & 40 & $3^{-5}27^{-1}$ & 22 & $3^{-5}243^{+1}$ & 40 & $3^{-5}243^{-1}$ & 22 \\
 \bottomrule\
  \end{tabularx}\vspace{2mm}\
\end{table}

\begin{table}[h]
  \caption{Dimension $d = \dim_\C \C[A]^G$ for some $5$-modules of signature $s$\label{tab:51}}
  \begin{tabularx}{\linewidth}{lX |lX |lX |lX} \toprule
$\fqm A$ & $d$ & $\fqm A$ & $d$ & $\fqm A$ & $d$ & $\fqm A$ & $d$ \\
$s=4$ &  & $s=0$ &  & $s=4$ &  & $s=0$ &  \\
\midrule
 $5^{+1}$ & 0 & $5^{-1}$ & 0 & $25^{+1}$ & 1 & $125^{+1}$ & 0 \\
 $5^{-2}$ & 0 & $5^{+2}$ & 2 & $25^{+2}$ & 3 & $125^{+2}$ & 4 \\
 $5^{+3}$ & 1 & $5^{-3}$ & 1 & $25^{+3}$ & 7 & $125^{-1}$ & 0 \\
 $5^{-4}$ & 1 & $5^{+4}$ & 11 & $25^{-1}$ & 1 & $125^{-2}$ & 0 \\
 $5^{+5}$ & 26 & $5^{-5}$ & 26 & $25^{-2}$ & 1 &  &  \\
 $5^{-6}$ & 106 & $5^{+6}$ & 156 & $25^{-3}$ & 7 &  &  \\
 $5^{+7}$ & 651 & $5^{-7}$ & 651 &  &  &  &  \\
 \bottomrule
  \end{tabularx}\vspace{2mm}
\end{table}

\begin{table}[h]
  \caption{Dimension $d = \dim_\C \C[A]^G$ for some $5$-modules of signature $s$\label{tab:52}}
  \begin{tabularx}{\linewidth}{lX |lX |lX |lX} \toprule
$\fqm A$ & $d$ & $\fqm A$ & $d$ & $\fqm A$ & $d$ & $\fqm A$ & $d$ \\
$s=4$ &  & $s=0$ &  & $s=4$ &  & $s=0$ &  \\
\midrule
 $5^{+1}125^{-1}$ & 0 & $5^{-1}125^{+1}$ & 0 & $5^{+1}125^{+1}$ & 2 & $5^{-1}125^{-1}$ & 2 \\
 $5^{-2}125^{-1}$ & 1 & $5^{+2}125^{+1}$ & 1 & $5^{-2}125^{+1}$ & 1 & $5^{+2}125^{-1}$ & 1 \\
 $5^{+3}125^{-1}$ & 1 & $5^{-3}125^{+1}$ & 1 & $5^{+3}125^{+1}$ & 11 & $5^{-3}125^{-1}$ & 11 \\
 $5^{-4}125^{-1}$ & 26 & $5^{+4}125^{+1}$ & 26 & $5^{-4}125^{+1}$ & 26 & $5^{+4}125^{-1}$ & 26 \\
 \bottomrule
  \end{tabularx}\vspace{2mm}
\end{table}

\FloatBarrier

\bibliographystyle{alpha}
\bibliography{bib}

\newcommand{\etalchar}[1]{$^{#1}$}
\def\cprime{$'$}
\begin{thebibliography}{BEF16}

\bibitem[BEF16]{bef-simple}
Jan~Hendrik Bruinier, Stephan Ehlen, and Eberhard Freitag.
\newblock Lattices with many {B}orcherds products.
\newblock {\em Math. Comp.}, 85(300):1953--1981, 2016.

\bibitem[Bro82]{BrownCohom}
Kenneth~S Brown.
\newblock {Cohomology of Groups}, 1982.

\bibitem[BS17]{hitchhikers}
Hatice Boylan and Nils-Peter Skoruppa.
\newblock Explicit formulas for {W}eil representations of {$\sym{SL}(2)$}.
\newblock preprint, 2017.

\bibitem[Ehl16]{sfqm-p}
Stephan Ehlen.
\newblock {\em {F}inite quadratic modules and simple lattices, source code and
  resources, version 0.2}, 2016.
\newblock \url{http://www.github.com/sehlen/sfqm}.

\bibitem[MH73]{Milnor-Husemoeller}
John Milnor and Dale Husemoller.
\newblock {\em Symmetric bilinear forms}.
\newblock Springer-Verlag, New York, 1973.
\newblock Ergebnisse der Mathematik und ihrer Grenzgebiete, Band 73.

\bibitem[Nik79]{Nikulin}
V.~V. Nikulin.
\newblock Integer symmetric bilinear forms and some of their geometric
  applications.
\newblock {\em Izv. Akad. Nauk SSSR Ser. Mat.}, 43(1):111--177, 238, 1979.

\bibitem[Nob76]{Nobs}
Alexandre Nobs.
\newblock Die irreduziblen {D}arstellungen der {G}ruppen {$SL_{2}(Z_{p})$},
  insbesondere {$SL_{2}(Z_{2})$}. {I}.
\newblock {\em Comment. Math. Helv.}, 51(4):465--489, 1976.

\bibitem[NW76]{Nobs-Wolfart}
Alexandre Nobs and J{\"u}rgen Wolfart.
\newblock Die irreduziblen {D}arstellungen der {G}ruppen {$SL_{2}(Z_{p})$},
  insbesondere {$SL_{2}(Z_{p})$}. {II}.
\newblock {\em Comment. Math. Helv.}, 51(4):491--526, 1976.

\bibitem[S{\etalchar{+}}13]{Sage}
W.\thinspace{}A. Stein et~al.
\newblock {\em {S}age {M}athematics {S}oftware ({V}ersion 5.8)}.
\newblock The Sage Development Team, 2013.
\newblock {\url{http://www.sagemath.org}}.

\bibitem[S{\etalchar{+}}16]{fqm-p}
N.\thinspace{}P. Skoruppa et~al.
\newblock {\em {F}inite {Q}uadratic {M}odules {P}ackage ({V}ersion 1.0)}.
\newblock The Countnumber Team, 2016.
\newblock {\url{http://data.countnumber.de}}.

\bibitem[Shi73]{Shimura}
Goro Shimura.
\newblock On modular forms of half integral weight.
\newblock {\em Ann. of Math. (2)}, 97:440--481, 1973.

\bibitem[Sko16]{Skoruppa}
Nils-Peter Skoruppa.
\newblock Weil representations associated to finite quadratic modules and
  vector-valued modular forms.
\newblock preprint, 2016.

\bibitem[Str13a]{Stroemberg}
Fredrik Str{\"o}mberg.
\newblock Weil representations associated with finite quadratic modules.
\newblock http://dx.doi.org/10.1007/s00209-013-1145-x, 2013.
\newblock Math. Z.

\bibitem[Str13b]{fredrik-weilfqm}
Fredrik Str\"{o}mberg.
\newblock Weil representations associated with finite quadratic modules.
\newblock {\em Mathematische Zeitschrift}, pages 1--19, 2013.

\bibitem[Wal63]{Wall-1}
C.~T.~C. Wall.
\newblock Quadratic forms on finite groups, and related topics.
\newblock {\em Topology}, 2:281--298, 1963.

\bibitem[Wal72]{Wall-2}
C.~T.~C. Wall.
\newblock Quadratic forms on finite groups. {II}.
\newblock {\em Bull. London Math. Soc.}, 4:156--160, 1972.

\end{thebibliography}
\end{document}